\theoremstyle{plain}
\newtheorem{theorem}{Theorem}[section]
\newtheorem{lemma}[theorem]{Lemma}
\newtheorem{corollary}[theorem]{Corollary}
\theoremstyle{definition}
\newtheorem{definition}[theorem]{Definition}
\newtheorem{example}[theorem]{Example}
\newcommand{\D}{\mathbb{D}}
\newcommand{\N}{\mathbb{N}}
\newcommand{\bigchi}{\mbox{\Large$\chi$}}
\newcommand{\Linf}{L^{\infty}}
\newcommand{\Lmuinf}{L_{\hspace{-.2ex}\mu}^{\infty}}
\author{Robert F.~Allen\textsuperscript{1} and Matthew A.~Pons\textsuperscript{2}}
\address{\textsuperscript{1}Department of Mathematics and Statistics,University of Wisconsin-La Crosse,}
\address{\textsuperscript{2}Department of Mathematics,North Central College}
\email{rallen@uwlax.edu, mapons@noctrl.edu}
\subjclass[2010]{primary: 47B33, secondary: 05C05}
\keywords{Composition operators, Trees, Weighted Banach spaces}
\date{}
\title[Composition Operators on $\Lmuinf$]{Composition Operators on Weighted Banach Spaces of a Tree}
\begin{document}

\begin{abstract}
We study composition operators on the weighted Banach spaces of an infinite tree.  We characterize the bounded and the compact operators, as well as determine the operator norm and the essential norm.  In addition, we study the isometric composition operators.  
\end{abstract}

\maketitle

\section{Introduction}
Let $X$ be a Banach space of complex-valued functions defined on a set $\Omega$ and let $\varphi$ be a self-map of $\Omega$.  The composition operator $C_\varphi$ with symbol $\varphi$ is the linear operator defined by $$C_\varphi f = f\circ\varphi$$ for all $f \in X$.  This operator is well studied when $X$ is a Banach (or Hilbert) space of analytic functions.  The interested reader is directed to \cite{CowenMacCluer:1995} for an excellent treatise on the subject.

In recent years, the study of operators on discrete structures has been introduced to the literature.  A particular structure of interest is an infinite tree, which can be viewed as a complete metric space with respect to the edge-counting metric.  In several instances, spaces of functions on these trees can be viewed as discrete analogs to classical function spaces, such as the Bloch space and the space of bounded analytic functions on the open unit disk $\D$.

The first of these discrete spaces of interest is the Lipschitz space, as defined in \cite{ColonnaEasley:2010}.  As defined, this space is the natural discrete analog of the Bloch space.  The multiplication and composition operators were studied in \cite{ColonnaEasley:2010} and \cite{AllenColonnaEasley:2014}, respectively.  From the study of the multiplication operators, an entire family of spaces, called iterated Logarithmic Lipschitz spaces, were created \cite{AllenColonnaEasley:2012}.  These spaces are discrete analogs to the logarithmic Bloch spaces.  Even more recently, multiplication and composition operators on discrete analogies of the Hardy spaces $H^p$ have been studied in \cite{MuthukumarPonnusamy:2016} and \cite{MuthukumarPonnusamy-I:2016}.

A very natural space of functions on trees is the space of bounded functions, denoted $\Linf$.  This is, of course, the discrete analog to the space of bounded analytic functions $H^\infty$.  In \cite{ColonnaEasley:2012}, the multiplication operators between the Lipschitz space and $\Linf$ were studied.  

A natural extension of $H^\infty$ is the so-called weighted Banach spaces.  For a positive, continuous function $\mu$ on $\Omega$, the weighted Banach space $H_{\hspace{-.2ex}\mu}^{\infty}$ is the space of functions $f$ on $\Omega$ for which the sup-norm of $\mu |f|$ is finite.  These spaces have been studied extensively, and the interested reader is directed to \cite{BonetDomanskiLindstromTaskinen:1998}, \cite{BonetDomanskiLindstrom:1999}, \cite{ContrerasHernandez-Diaz:2000}, and \cite{BonetLindstromWolf:2008} for more information.

In \cite{AllenCraig:2015}, the weighted Banach spaces on a tree were defined, and the multiplication operators studied.  In this paper, we wish to further study these weighted Banach spaces, as well as study the composition operators on them.  The results of this work are compatible with the results found in \cite{BonetDomanskiLindstromTaskinen:1998}, \cite{BonetDomanskiLindstrom:1999}, \cite{ContrerasHernandez-Diaz:2000}, and \cite{BonetLindstromWolf:2008}, and as corollaries our results apply to composition operators on $\Linf$.

\subsection{Organization of this Paper}
In Section \ref{Section:Spaces}, we study the weighted Banach spaces in more detail than in \cite{AllenCraig:2015}.  In Section \ref{Section:Boundedness}, we characterize the symbols which induce bounded composition operators on the weighted Banach spaces, as well as determine the operator norm.  

In Section \ref{Section:Compactness}, we characterize the symbols which induce compact composition operators on the weighted Banach spaces, as well as determine the essential norm.  Finally, in Section \ref{Section:Isometries}, we provide various necessary conditions and sufficient conditions for the composition operator to be an isometry.  For the case of the space $\Linf$, we characterize the symbols which induce isometric composition operators.

\section{Weighted Banach Spaces of a Tree}\label{Section:Spaces}
A tree $T$ is a locally finite, connected, and simply-connected graph, which, as a set, we identify with the collection of its vertices. Two vertices $v$ and $w$ are called neighbors if there is an edge $[v,w]$ connecting them, and we adopt the notation $v\sim w$. A vertex is called terminal if it has a unique neighbor. A path is a finite or infinite sequence of vertices $[v_0,v_1,\dots]$ such that $v_k\sim v_{k+1}$ and  $v_{k-1}\neq v_{k+1}$, for all $k$. 

In this article, we shall assume the tree $T$ to be without terminal vertices (and hence infinite), and rooted at a vertex $o$.  The length of a finite path $[v=v_0,v_1,\dots,w=v_n]$ (with $v_k\sim v_{k+1}$ for $k=0,\dots, n$) is the number $n$ of edges connecting $v$ to $w$. The distance, $d(v,w)$, between vertices $v$ and $w$ is the length of the unique path connecting $v$ to $w$. The distance between the root $o$ and a vertex $v$ is called the length of $v$ and is denoted by $|v|$. 

A function on a tree is a complex-valued function on the set of its vertices. 

\begin{definition} The space of bounded functions on a tree $T$, denoted $\Linf$, is the set of functions $f$ on $T$ such that $$\displaystyle\sup_{v \in T} |f(v)| < \infty.$$ 
\end{definition}

As with the case of $H^\infty$, $\Linf$ is a Banach space under the norm $\|f\|_\infty = \displaystyle\sup_{v \in T} |f(v)|$.  To generalize the weighted Banach spaces of a tree, we take as weights any positive function on $T$.

\begin{definition} Given a positive function $\mu$ on a tree $T$, the weighted Banach space on $T$, denoted $\Lmuinf$, is defined as the set of functions $f$ on $T$ for which $$\sup_{v \in T} \mu(v)|f(v)| < \infty.$$
\end{definition} In \cite{AllenCraig:2015}, it was shown that $\Lmuinf$ is a functional Banach space (see Definition 1.1 of \cite{CowenMacCluer:1995}) under the norm $\|f\|_\mu = \displaystyle\sup_{v \in T} \mu(v)|f(v)|.$  

Clearly for the constant weight $\mu \equiv 1$, $\Lmuinf$ is $\Linf$.  In fact, if the weight is bounded above and away from zero, then the functions in $\Lmuinf$ are precisely the bounded functions on $T$ and the norms $\|\cdot\|_\mu$ and $\|\cdot\|_\infty$ are equivalent.  

\begin{theorem} Suppose there exist positive constants $m$ and $M$ such that $m \leq \mu(v) \leq M$ for all $v \in T$.  Then $\Lmuinf = \Linf$ with equivalent norms.
\end{theorem}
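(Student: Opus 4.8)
The plan is to establish the set equality $\Lmuinf = \Linf$ by double inclusion while simultaneously tracking the relevant norm estimates, so that the equivalence of norms falls out as a byproduct. The only idea involved is that the pointwise two-sided bound $m \le \mu(v) \le M$ transfers directly to a two-sided comparison between $\mu(v)|f(v)|$ and $|f(v)|$, and that taking the supremum over $v \in T$ preserves these inequalities.

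First I would take $f \in \Linf$. For every $v \in T$ we have $\mu(v)|f(v)| \le M|f(v)| \le M\|f\|_\infty$, and taking the supremum over $v$ gives $\|f\|_\mu \le M\|f\|_\infty < \infty$. Hence $f \in \Lmuinf$, which shows $\Linf \subseteq \Lmuinf$ together with the upper estimate $\|f\|_\mu \le M\|f\|_\infty$.

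Conversely, take $f \in \Lmuinf$. Since $m > 0$, for every $v \in T$ we may write $|f(v)| \le \frac{1}{m}\,\mu(v)|f(v)| \le \frac{1}{m}\|f\|_\mu$, and taking the supremum over $v$ gives $\|f\|_\infty \le \frac{1}{m}\|f\|_\mu < \infty$. Hence $f \in \Linf$, which shows $\Lmuinf \subseteq \Linf$ together with the lower estimate $m\|f\|_\infty \le \|f\|_\mu$. Combining the two chains yields $m\|f\|_\infty \le \|f\|_\mu \le M\|f\|_\infty$ for all $f$, which is exactly the assertion that $\|\cdot\|_\mu$ and $\|\cdot\|_\infty$ are equivalent norms on the common underlying vector space.

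There is no genuine obstacle in this argument; it is routine. The only points requiring a modicum of care are that $m$ is \emph{strictly} positive, so that division by $m$ is legitimate, and that the suprema defining the two norms are taken over the same index set $T$, so that the pointwise inequalities indeed lift to inequalities between the norms.
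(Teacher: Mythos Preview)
Your proof is correct; the paper states this theorem without proof, so there is nothing to compare against, and your argument supplies exactly the routine double-inclusion and two-sided norm estimate $m\|f\|_\infty \le \|f\|_\mu \le M\|f\|_\infty$ that the authors evidently considered obvious enough to omit.
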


%\begin{proof}
%	Suppose $\mu$ is bounded above and away from zero.  Then for all $v \in T$ we observe
%	$m|f(v)| \leq \mu(v)|f(v)| \leq M|f(v)|$, and thus passing to the supremum over all $v \in T$ we obtain $$m\|f\|_\infty \leq \|f\|_\mu \leq M\|f\|_\infty.$$  This immediately shows $\Lmuinf = \Linf$ and the norms are equivalent.
%\end{proof}

The next two results show that normalized characteristic functions (and in fact characteristic functions) belong to the $\Lmuinf$ spaces.   

\begin{lemma}\label{Lemma:chi function} Let $w$ be a fixed vertex in $T$.  Then the function $f(v) = \displaystyle\frac{1}{\mu(v)}\chi_{w}(v)$ is an element of $\Lmuinf$ with $\|f\|_\mu = 1$.  
\end{lemma}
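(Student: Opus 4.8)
The plan is a direct computation straight from the definition of the norm $\|\cdot\|_\mu$, since $f$ is supported on a single vertex. First I would record what $f$ actually looks like: because $\chi_{w}$ is the characteristic function of the singleton $\{w\}$, the function $f(v)=\frac{1}{\mu(v)}\chi_{w}(v)$ vanishes at every vertex $v\neq w$ and takes the value $f(w)=\frac{1}{\mu(w)}$ at $v=w$. This value is well defined precisely because $\mu$ is assumed to be a positive function on $T$, so $\mu(w)>0$; this is the only ``hypothesis check'' needed, and it is the closest thing to an obstacle in the argument.

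Next I would evaluate $\mu(v)|f(v)|$ at an arbitrary vertex $v\in T$. For $v\neq w$ this quantity equals $0$, while for $v=w$ it equals $\mu(w)\cdot\frac{1}{\mu(w)}=1$. Taking the supremum over all $v\in T$ therefore yields $\sup_{v\in T}\mu(v)|f(v)|=1$, which is finite. By the definition of $\Lmuinf$ this immediately gives $f\in\Lmuinf$, and the same computation shows $\|f\|_\mu=1$, completing the proof.

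In short, there is no genuine difficulty: the statement is essentially an unwinding of the definitions of $\chi_w$ and of $\|\cdot\|_\mu$, and the normalization by $1/\mu$ is engineered exactly so that the weighted value at $w$ is $1$. I expect these norm-one, finitely supported functions to serve later as convenient test functions for establishing lower bounds on the operator norm (and essential norm) of $C_\varphi$, which is presumably why the lemma is isolated here.
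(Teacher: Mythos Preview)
Your proposal is correct: the direct computation from the definitions is exactly what is needed, and there is no hidden subtlety. The paper in fact states this lemma without proof, presumably for the same reason you identify---it is an immediate unwinding of the definitions of $\chi_w$ and $\|\cdot\|_\mu$---so your argument supplies precisely the (trivial) details the authors chose to omit.
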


\noindent Sequences of these normalized characteristic functions are used to characterize compact composition operators in Section \ref{Section:Compactness}.
\begin{lemma}\label{lemma:bounded family} Let $(w_n)$ be a sequence of vertices with $|w_n| \to \infty$ as $n \to \infty$.  Then the sequence of functions $(f_n)$ defined by $f_n(v) = \frac{1}{\mu(v)}\bigchi_{w_n}(v)$ is a bounded sequence in $\Lmuinf$ converging to 0 pointwise.
\end{lemma}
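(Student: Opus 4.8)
The plan is to dispatch the two assertions separately, each being a short consequence of what has already been established. For boundedness, I would simply invoke Lemma~\ref{Lemma:chi function}: applied with the fixed vertex taken to be $w_n$, it shows that each $f_n$ lies in $\Lmuinf$ with $\|f_n\|_\mu = 1$. Hence $(f_n)$ is not merely bounded but sits on the unit sphere of $\Lmuinf$, and in particular $\sup_n \|f_n\|_\mu = 1 < \infty$, which is the first claim.

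For the pointwise convergence, the idea is that the hypothesis $|w_n| \to \infty$ guarantees that any prescribed vertex is eventually missed by the sequence $(w_n)$. Concretely, I would fix an arbitrary vertex $v \in T$ and choose an index $n_0$ beyond which $|w_n| > |v|$; such an $n_0$ exists precisely because $|w_n| \to \infty$. For $n \ge n_0$ we then have $w_n \neq v$, since the two vertices have different lengths, so $\bigchi_{w_n}(v) = 0$ and therefore $f_n(v) = \frac{1}{\mu(v)}\bigchi_{w_n}(v) = 0$. Thus the scalar sequence $(f_n(v))_n$ is eventually $0$, hence converges to $0$; as $v$ was arbitrary, $f_n \to 0$ pointwise on $T$.

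I do not expect a genuine obstacle here: the lemma is essentially the packaging of the normalization supplied by Lemma~\ref{Lemma:chi function} together with the elementary observation about $|w_n| \to \infty$. The one point worth stating carefully is that only pointwise convergence is asserted—no norm convergence can hold, since $\|f_n\|_\mu \equiv 1$—and it is exactly this combination of features (bounded in norm, vanishing pointwise) that makes such sequences the natural test functions for detecting compactness of composition operators in Section~\ref{Section:Compactness}.
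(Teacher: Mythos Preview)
Your proof is correct and follows essentially the same approach as the paper: you cite Lemma~\ref{Lemma:chi function} for boundedness exactly as the paper does, and for pointwise convergence you show that $f_n(v)$ is eventually zero at each fixed vertex. Your route to the latter via the length comparison $|w_n| > |v|$ for large $n$ is, if anything, slightly cleaner than the paper's case split on whether $v$ coincides with some $w_k$.
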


\begin{proof}
	By Lemma \ref{Lemma:chi function} for each $n \in \N$, $f_n$ is an element of $\Lmuinf$ with $\|f_n\|_\mu = 1$.  Thus $(f_n)$ is a bounded sequence in $\Lmuinf$.  Fix a vertex $w \in T$.  We will show that $(f_n(w))$ converges to 0.  If $w \neq w_n$ for all $n \in \N$, then $f_n(w) = 0$ for all $n \in \N$.  Suppose $w = w_k$ for some $k \in \N$.  Then for all $n \geq k+1$, we have $f_n(w) = 0$.  Thus $(f_n)$ converges pointwise to 0 on $T$.
\end{proof}

The following three results prove point evaluation is a bounded linear functional on $\Lmuinf$, as well as how the point evaluation functionals compare.

\begin{lemma}\label{Lemma:Point-evaluation bound}
Let $f \in \Lmuinf$.  Then $|f(v)| \leq \displaystyle\frac{1}{\mu(v)}\|f\|_\mu$ for all $v \in T$.
\end{lemma}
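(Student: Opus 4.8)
The plan is to unwind the definition of the norm $\|\cdot\|_\mu$ and use the positivity of the weight $\mu$. Fix $f \in \Lmuinf$ and a vertex $v \in T$. First I would observe that $\mu(v)|f(v)|$ is one of the terms over which the supremum defining $\|f\|_\mu$ is taken, so that
\[
\mu(v)|f(v)| \leq \sup_{w \in T} \mu(w)|f(w)| = \|f\|_\mu.
\]
Since $\mu$ is a positive function on $T$, we have $\mu(v) > 0$, and dividing both sides of the preceding inequality by $\mu(v)$ yields
\[
|f(v)| \leq \frac{1}{\mu(v)}\|f\|_\mu.
\]
As $v$ was arbitrary, this holds for all $v \in T$, which is the claim.

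There is no real obstacle here: the only thing being used is that a single term is bounded by a supremum and that one may divide by the strictly positive quantity $\mu(v)$. The point of isolating this estimate is structural rather than technical — it is the statement that point evaluation $f \mapsto f(v)$ is a bounded linear functional on $\Lmuinf$ with operator norm at most $1/\mu(v)$ (indeed exactly $1/\mu(v)$, as witnessed by the normalized characteristic function from Lemma \ref{Lemma:chi function}), and it will be invoked repeatedly when estimating $\|C_\varphi f\|_\mu$ in the later sections on boundedness and compactness.
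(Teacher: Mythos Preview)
Your proof is correct; this is exactly the one-line argument one expects, and the paper itself omits the proof of this lemma as immediate from the definition of $\|\cdot\|_\mu$ and the positivity of $\mu$. Your closing remark about the functional norm being exactly $1/\mu(v)$ also matches the content of the lemma that follows in the paper.
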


%\begin{proof}
%Let $f \in \Lmuinf$ and fix $v \in T$.  Then $$|f(v)| = \displaystyle\frac{\mu(v)|f(v)|}{\mu(v)} \leq \frac{1}{\mu(v)}\|f\|_\mu,$$ as desired.
%\end{proof}

\begin{lemma} For $v \in T$, the norm of the point evaluation functional $K_v$ on $\Lmuinf$ is given by $\|K_v\| = \displaystyle\frac{1}{\mu(v)}$.
\end{lemma}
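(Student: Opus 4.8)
The plan is to establish the two inequalities $\|K_v\| \le \frac{1}{\mu(v)}$ and $\|K_v\| \ge \frac{1}{\mu(v)}$ separately. Recall that by definition $K_v$ is the linear functional on $\Lmuinf$ given by $K_v f = f(v)$, so $\|K_v\| = \sup\{|f(v)| : f \in \Lmuinf,\ \|f\|_\mu \le 1\}$.

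For the upper bound, I would invoke Lemma \ref{Lemma:Point-evaluation bound} directly: for every $f \in \Lmuinf$ we have $|K_v f| = |f(v)| \le \frac{1}{\mu(v)}\|f\|_\mu$, and taking the supremum over the unit ball of $\Lmuinf$ yields $\|K_v\| \le \frac{1}{\mu(v)}$. In particular this also shows $K_v$ is bounded.

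For the lower bound, I would exhibit a norm-one function achieving (or approaching) the value $\frac{1}{\mu(v)}$. The natural candidate is the normalized characteristic function at $v$ itself: by Lemma \ref{Lemma:chi function}, the function $f(u) = \frac{1}{\mu(u)}\chi_v(u)$ lies in $\Lmuinf$ with $\|f\|_\mu = 1$, and $f(v) = \frac{1}{\mu(v)}$. Hence $\|K_v\| \ge |K_v f| = \frac{1}{\mu(v)}$.

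Combining the two inequalities gives $\|K_v\| = \frac{1}{\mu(v)}$. There is no real obstacle here; the only point worth care is simply recognizing that the extremal function for $K_v$ is precisely the normalized characteristic function supplied by Lemma \ref{Lemma:chi function}, so that the bound of Lemma \ref{Lemma:Point-evaluation bound} is attained.
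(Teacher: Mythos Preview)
Your proof is correct and follows essentially the same approach as the paper: the upper bound via Lemma~\ref{Lemma:Point-evaluation bound} and the lower bound by testing against the normalized characteristic function of Lemma~\ref{Lemma:chi function}. The only cosmetic difference is that the paper writes the test function as $g(w)=\frac{1}{\mu(v)}\chi_v(w)$, but since $\chi_v$ is supported at $v$ this is the same function you use.
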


\begin{proof}
	Let $f \in \Lmuinf$ such that $\|f\|_\mu = 1$.  Then by Lemma \ref{Lemma:Point-evaluation bound}, $$\|K_v\| = \sup_{\|f\|_\mu = 1} |K_vf| = \sup_{\|f\|_\mu = 1} |f(v)| \leq \sup_{\|f\|_\mu = 1} \frac{1}{\mu(v)} \|f\|_\mu  = \frac{1}{\mu(v)}$$ for $v \in T$.  By Lemma \ref{Lemma:chi function}, the function $g(w) = \displaystyle\frac{1}{\mu(v)}\bigchi_v(w)$ is an element of $\Lmuinf$ with $\|g\|_\mu = 1$.  Observe, $$\frac{1}{\mu(v)} = |g(v)| = |K_v g| \leq \sup_{\|f\|_\mu = 1} |K_v f| = \|K_v\|.$$  Thus $\|K_v\| = \displaystyle\frac{1}{\mu(v)}$ as desired.
\end{proof}

\begin{lemma}\label{Lemma:equal functionals} If $K_v = K_w$, for $v,w \in T$, then $v=w$.
\end{lemma}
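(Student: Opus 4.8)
The plan is to argue by contraposition: assuming $v \neq w$, I will exhibit a single function in $\Lmuinf$ on which $K_v$ and $K_w$ disagree, which forces $K_v \neq K_w$ as functionals. The natural separating function is the normalized characteristic function concentrated at $v$.

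First I would invoke Lemma \ref{Lemma:chi function} with the fixed vertex taken to be $v$: the function $g$ defined by $g(u) = \frac{1}{\mu(u)}\bigchi_{v}(u)$ is an element of $\Lmuinf$ (indeed $\|g\|_\mu = 1$, though only membership is needed here). Next I would evaluate: $K_v g = g(v) = \frac{1}{\mu(v)} > 0$ since $\mu$ is a positive function, whereas $K_w g = g(w) = \frac{1}{\mu(w)}\bigchi_{v}(w) = 0$ because $w \neq v$. Hence $K_v g \neq K_w g$, so $K_v \neq K_w$. Taking the contrapositive gives: if $K_v = K_w$, then $v = w$.

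There is essentially no obstacle here — the only point to be careful about is that $\mu$ is strictly positive (so $\frac{1}{\mu(v)} \neq 0$), which is part of the standing hypothesis on the weight, and that $\bigchi_v(w) = 0$ precisely when $w \neq v$. One could equally well use the plain characteristic function $\bigchi_v$, which lies in $\Lmuinf$ by the remark preceding Lemma \ref{Lemma:chi function}, but routing through Lemma \ref{Lemma:chi function} keeps the argument self-contained within the results already established.
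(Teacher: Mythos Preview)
Your proof is correct and follows essentially the same approach as the paper: both separate $K_v$ from $K_w$ via a characteristic-type function supported at $v$. The paper uses the plain characteristic function $\bigchi_v$ (as you note in your final remark) rather than the normalized version, but the argument is otherwise identical.
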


\begin{proof}
	Suppose $K_v = K_w$, but $v \neq w$.  Then $K_v f = K_w f$ for all $f \in \Lmuinf$.  In particular, for $f = \bigchi_v$, we observe $K_v f = \bigchi_v(v) = 1$ but $K_w f = \bigchi_v(w) = 0$ since $v \neq w$.  Thus $K_v \neq K_w$, which is a contradiction.
\end{proof}

\section{Boundedness and Operator Norm}\label{Section:Boundedness}
In this section we characterize the bounded composition operators on $\Lmuinf$.  In addition, we determine the operator norm of $C_\varphi$.  The interaction between the weight $\mu$ and the symbol $\varphi$ for bounded composition operators is characterized in the following quantity.  For a function $\varphi:T \to T$, define $$\sigma_\varphi = \displaystyle\sup_{v \in T} \frac{\mu(v)}{\mu(\varphi(v))}.$$

\begin{theorem}\label{theorem:boundedness}
Let $\varphi$ be a self-map of a tree $T$.  Then $C_\varphi$ is bounded on $\Lmuinf$ if and only if $\sigma_\varphi$ is finite.  Furthermore $\|C_\varphi\| = \sigma_\varphi$.
\end{theorem}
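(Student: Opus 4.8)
The plan is to establish the two inequalities $\|C_\varphi\| \le \sigma_\varphi$ and $\sigma_\varphi \le \|C_\varphi\|$ directly; the boundedness characterization is then immediate, since $C_\varphi$ bounded is equivalent to $\|C_\varphi\| < \infty$, which by these two inequalities is equivalent to $\sigma_\varphi < \infty$.

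For the upper bound I would take an arbitrary $f \in \Lmuinf$ and estimate, for each $v \in T$,
$$\mu(v)\,|f(\varphi(v))| = \frac{\mu(v)}{\mu(\varphi(v))}\;\mu(\varphi(v))\,|f(\varphi(v))| \le \frac{\mu(v)}{\mu(\varphi(v))}\,\|f\|_\mu \le \sigma_\varphi\,\|f\|_\mu.$$
Taking the supremum over $v \in T$ gives $\|C_\varphi f\|_\mu \le \sigma_\varphi \|f\|_\mu$, so when $\sigma_\varphi$ is finite, $C_\varphi$ maps $\Lmuinf$ into itself and is bounded with $\|C_\varphi\| \le \sigma_\varphi$.

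For the reverse inequality I would test against the normalized characteristic functions from Lemma \ref{Lemma:chi function}. Fix $w \in T$ and set $g(v) = \frac{1}{\mu(v)}\bigchi_{\varphi(w)}(v)$, which lies in $\Lmuinf$ with $\|g\|_\mu = 1$. Then $(C_\varphi g)(v) = \frac{1}{\mu(\varphi(v))}\bigchi_{\varphi(w)}(\varphi(v))$, and evaluating at $v = w$ gives $\mu(w)\,|(C_\varphi g)(w)| = \frac{\mu(w)}{\mu(\varphi(w))}$. Hence
$$\|C_\varphi\| \ge \|C_\varphi g\|_\mu \ge \mu(w)\,|(C_\varphi g)(w)| = \frac{\mu(w)}{\mu(\varphi(w))}.$$
Taking the supremum over $w \in T$ yields $\|C_\varphi\| \ge \sigma_\varphi$; in particular boundedness of $C_\varphi$ forces $\sigma_\varphi < \infty$. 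Combining with the upper bound gives $\|C_\varphi\| = \sigma_\varphi$.

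I expect no genuine obstacle here: the argument is routine once the correct test functions are chosen. The only point meriting a little care is in the lower bound, where one should evaluate $C_\varphi g$ at the specific vertex $w$ used to define $g$ rather than attempt to compute the full norm $\|C_\varphi g\|_\mu$; this avoids any bookkeeping over the fibers $\varphi^{-1}(\{u\})$ and the possibility that $\varphi$ is not surjective.
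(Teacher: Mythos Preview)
Your proof is correct. The upper-bound argument is identical to the paper's. For the lower bound the paper takes a slightly more direct route: instead of the family of localized test functions $\frac{1}{\mu}\bigchi_{\varphi(w)}$, it uses the single function $g(v)=1/\mu(v)$, which has $\|g\|_\mu=1$ and for which one computes directly
\[
\|C_\varphi g\|_\mu=\sup_{v\in T}\frac{\mu(v)}{\mu(\varphi(v))}=\sigma_\varphi.
\]
This yields both the implication ``$C_\varphi$ bounded $\Rightarrow\sigma_\varphi<\infty$'' and the inequality $\sigma_\varphi\le\|C_\varphi\|$ in one stroke, with no final supremum over $w$ needed. Your characteristic-function approach is equally valid and these functions do reappear in the essential-norm argument later in the paper, but here the single global test function $1/\mu$ is the more economical choice.
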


\begin{proof}
Suppose $C_\varphi$ is bounded on $\Lmuinf$.  Define the function $g(v) = \displaystyle\frac{1}{\mu(v)}$.  Clearly $g \in \Lmuinf$ with $\|g\|_\mu = 1$.  Observe \begin{equation}\label{Equation:C_varphi-lower-bound} \|C_\varphi g\|_\mu = \sup_{v \in T} \mu(v)|g(\varphi(v))| = \sup_{v \in T} \displaystyle\frac{\mu(v)}{\mu(\varphi(v))} = \sigma_\varphi.\end{equation}  Since $C_\varphi$ is bounded on $\Lmuinf$, we have $\sigma_\varphi < \infty$.

Now suppose $\sigma_\varphi$ is finite and let $f \in \Lmuinf$.  From Lemma \ref{Lemma:Point-evaluation bound} we obtain 
\begin{equation}\label{Equation:C_varphi-upper-bound} \|C_\varphi f\|_\mu = \sup_{v \in T} \mu(v)|f(\varphi(v))| \leq \sup_{v \in T} \displaystyle\frac{\mu(v)}{\mu(\varphi(v))}\|f\|_\mu = \sigma_\varphi\|f\|_\mu.\end{equation}  Thus $C_\varphi$ is bounded as an operator on $\Lmuinf$.  Finally, we see that \eqref{Equation:C_varphi-lower-bound} implies that $\sigma_\varphi \leq \|C_\varphi\|$, where as \eqref{Equation:C_varphi-upper-bound} implies that $\|C_\varphi\| \leq \sigma_\varphi$.  Therefore, $\|C_\varphi\| = \sigma_\varphi$.
\end{proof}

\begin{corollary}
Every self-map of a tree $T$ induces a bounded composition operator $C_\varphi$ on $\Linf$.  Moreover, $\|C_\varphi\| = 1$.
\end{corollary}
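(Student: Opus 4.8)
The plan is to derive this as an immediate special case of Theorem \ref{theorem:boundedness} by evaluating $\sigma_\varphi$ for the constant weight. When $X = \Linf$, the relevant weight is $\mu \equiv 1$, so for \emph{any} self-map $\varphi$ of $T$ we compute
\begin{equation*}
\sigma_\varphi = \sup_{v \in T} \frac{\mu(v)}{\mu(\varphi(v))} = \sup_{v \in T} \frac{1}{1} = 1.
\end{equation*}
Since $\sigma_\varphi = 1 < \infty$, Theorem \ref{theorem:boundedness} immediately gives that $C_\varphi$ is bounded on $\Linf$ and that $\|C_\varphi\| = \sigma_\varphi = 1$.

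In more detail, I would first observe that $\Linf$ is exactly $\Lmuinf$ with the weight $\mu \equiv 1$ (this is the constant-weight remark made just before the statement of the earlier theorem, and also follows trivially from Theorem 2.3 with $m = M = 1$), so that the hypotheses of Theorem \ref{theorem:boundedness} apply verbatim. Then the one-line computation of $\sigma_\varphi$ above does all the work: boundedness follows from the finiteness of $\sigma_\varphi$, and the norm identity follows from the ``furthermore'' clause of Theorem \ref{theorem:boundedness}.

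There is essentially no obstacle here; the only point worth a moment's care is confirming that $\Linf$ genuinely is the $\mu \equiv 1$ instance of the weighted space (so that we are entitled to invoke the theorem rather than re-running its proof), but this is immediate from the definitions. One could alternatively give a direct self-contained argument — $\|C_\varphi f\|_\infty = \sup_v |f(\varphi(v))| \le \sup_{w} |f(w)| = \|f\|_\infty$ for the upper bound, and testing on a constant function of norm $1$ for the lower bound — but routing through $\sigma_\varphi$ keeps the exposition uniform with the rest of the section.
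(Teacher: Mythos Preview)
Your proposal is correct and matches the paper's approach: the corollary is stated without proof immediately after Theorem~\ref{theorem:boundedness}, so the intended argument is precisely the specialization $\mu\equiv 1$ giving $\sigma_\varphi = 1$ that you wrote out. There is nothing to add.
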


We next consider which weighted Banach spaces share this property with $\Linf$, that is for what $\mu$ is it true that every self-map $\varphi$ induces a bounded composition operator on $\Lmuinf$.  At first glance, one might consider bounded weights to work.  As the next example illustrates, this is not sufficient. 

\begin{example}
Suppose $\mu(v) = \displaystyle\frac{1}{|v|}$ for $v \in T^*$ and $\mu(o) = 2$.  Then $0 < \mu(v) \leq 2$ for all $v \in T$.  Define $\varphi:T\to T$ to map a vertex $v$ to a vertex of length $2^{|v|}$ for all $v \in T$.  Let $(v_n)$ be a sequence of vertices such that $|v_n| \to \infty$ as $n \to \infty$.  Then $$\frac{\mu(v_n)}{\mu(\varphi(v_n))} = \frac{|\varphi(v_n)|}{|v_n|} = \frac{2^{|v_n|}}{|v_n|} \to \infty$$ as $|v_n| \to \infty$.  Thus by Theorem \ref{theorem:boundedness}, $C_\varphi$ is not bounded.
\end{example}

\begin{theorem}
Every self-map of a tree $T$ induces a bounded composition operator on $\Lmuinf$ if and only if there exist positive constants $m$ and $M$ such that $m \leq \mu(v) \leq M$ for all $v \in T$.
\end{theorem}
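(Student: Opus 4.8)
The plan is to reduce the statement to the norm formula of Theorem~\ref{theorem:boundedness}. By that theorem, for a self-map $\varphi$ the operator $C_\varphi$ is bounded on $\Lmuinf$ precisely when $\sigma_\varphi=\sup_{v\in T}\mu(v)/\mu(\varphi(v))$ is finite. So the assertion to prove becomes the purely arithmetic claim: \emph{every} self-map $\varphi$ of $T$ has $\sigma_\varphi<\infty$ if and only if there exist positive constants $m,M$ with $m\le\mu(v)\le M$ for all $v\in T$.

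For the sufficiency, suppose $m\le\mu(v)\le M$ on $T$. Then for an arbitrary self-map $\varphi$ and every $v\in T$ one has $\mu(v)/\mu(\varphi(v))\le M/m$, so $\sigma_\varphi\le M/m<\infty$ and $C_\varphi$ is bounded by Theorem~\ref{theorem:boundedness}. (Alternatively this follows since in this case $\Lmuinf=\Linf$ together with the corollary that every self-map of $T$ induces a bounded composition operator on $\Linf$.) This direction is immediate; the content is all in the converse.

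For the necessity I would argue the contrapositive: assuming the two-sided bound fails, I would produce a self-map $\varphi$ with $\sigma_\varphi=\infty$, which by Theorem~\ref{theorem:boundedness} makes $C_\varphi$ unbounded. The failure of the bound means $\sup_{v\in T}\mu(v)=\infty$ or $\inf_{v\in T}\mu(v)=0$, and I would treat these separately. If $\sup_{v\in T}\mu(v)=\infty$, I would take $\varphi$ to be the constant map $\varphi\equiv o$; then $\sigma_\varphi=\sup_{v\in T}\mu(v)/\mu(o)=\infty$ since $\mu(o)$ is a fixed positive number. If instead $\inf_{v\in T}\mu(v)=0$, I would use that $T$ is infinite to fix distinct vertices $v_1,v_2,\dots$, and for each $n$ pick a vertex $w_n$ with $\mu(w_n)<\mu(v_n)/n$ — possible exactly because the infimum of $\mu$ is $0$ — and set $\varphi(v_n)=w_n$ (and, say, $\varphi(v)=o$ for every remaining vertex $v$). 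This $\varphi$ is a legitimate self-map of $T$, and $\sigma_\varphi\ge\sup_n\mu(v_n)/\mu(w_n)\ge\sup_n n=\infty$, as required.

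The only delicate point will be the construction in the case $\inf_{v\in T}\mu(v)=0$: one needs infinitely many distinct source vertices and must let the index $n$ simultaneously enumerate those vertices and control the smallness of $\mu(w_n)$, so that the ratios $\mu(v_n)/\mu(w_n)$ blow up. It is worth recording explicitly that no assumption on the location of the small weights is needed — $\inf_{v\in T}\mu(v)=0$ by itself furnishes, below every threshold, a vertex with weight under it — so the argument works for an arbitrary positive weight $\mu$. Everything else is a direct appeal to Theorem~\ref{theorem:boundedness}.
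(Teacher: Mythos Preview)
Your proof is correct and follows essentially the same route as the paper's: both directions appeal to Theorem~\ref{theorem:boundedness}, the sufficiency is the immediate estimate $\sigma_\varphi\le M/m$, and for the necessity both you and the paper argue by contrapositive, using the constant map $\varphi\equiv o$ when $\mu$ is unbounded above and constructing a map into vertices of arbitrarily small weight when $\inf_{v}\mu(v)=0$. The only cosmetic difference is in that second case: the paper takes source vertices $v_n$ with $\mu(v_n)\to 0$ and targets a subsequence $(w_n)$ with $\mu(w_n)<\mu(v_n)^2$, whereas you allow arbitrary distinct $v_n$ and pick $w_n$ with $\mu(w_n)<\mu(v_n)/n$; either choice yields $\sigma_\varphi=\infty$.
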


\begin{proof}
Suppose there exist positive constants $m$ and $M$ such that $m \leq \mu(v) \leq M$ for all $v \in T$.  Let $\varphi$ be any self-map of $T$.  Then $$\sigma_\varphi = \sup_{v \in T} \frac{\mu(v)}{\mu(\varphi(v))} \leq \frac{M}{m} < \infty.$$  Thus $C_\varphi$ is bounded on $\Lmuinf$.
	
Now suppose every self-map of $T$ induces a bounded composition operator on $\Lmuinf$.  First assume $\mu$ is unbounded, i.e. there exists a sequence $(v_n)$ of vertices such that $\mu(v_n) \to \infty$ as $n \to \infty$.  By Theorem \ref{theorem:boundedness}, the composition operator induced by the constant map $\varphi(v) = o$ is not bounded on $\Lmuinf$ since 
$$\frac{\mu(v_n)}{\mu(\varphi(v_n))} = \frac{\mu(v_n)}{\mu(o)} \to \infty$$ as $n \to \infty$, which is a contradiction.  Thus there must exist a positive constant $M$ such that $\mu(v) \leq M$ for all $v \in T$.
	
Finally, assume $\mu$ is not bounded away from zero, i.e. there exists a sequence $(v_n)$ of vertices such that $\mu(v_n) \to 0$ as $n \to \infty$.  We may construct a subsequence of $(v_n)$, which we denote $(w_n)$ with the property that for all $n \in \N$, $\mu(w_n) < \mu(v_n)^2$.  Define $\varphi(v_n) = w_n$ and $o$ otherwise. By Theorem \ref{theorem:boundedness}, the composition operator induced by $\varphi$ is not bounded on $\Lmuinf$ since 
$$\frac{\mu(v_n)}{\mu(\varphi(v_n))} > \frac{1}{\mu(v_n)} \to \infty$$ as $n \to \infty$, which is a contradiction.  Thus there must exist a positive constant $m$ such that $\mu(v) \geq m$ for all $v \in T$.
\end{proof}

\section{Compactness and Essential Norm}\label{Section:Compactness}
In this section, we characterize the compact composition operators on $\Lmuinf$, as well as determine the essential norm.  The means by which we will characterize the compact composition operators is Lemma \ref{Lemma:Compactness-criterion}.  

\begin{lemma}\cite[Lemma 2.5]{AllenCraig:2015}\label{Lemma:Compactness-criterion} 
	Let $X,Y$ be two Banach spaces of functions on a tree $T$.  Suppose that
	\begin{enumerate}
		\item[\normalfont{(i)}] the point evaluation functionals of $X$ are bounded,
		\item[\normalfont{(ii)}] the closed unit ball of $X$ is a compact subset of $X$ in the topology of uniform convergence on compact sets,
		\item[\normalfont{(iii)}] $A:X \to Y$ is bounded when $X$ and $Y$ are given the topology of uniform convergence on compact sets.
	\end{enumerate}  Then $A$ is a compact operator if and only if given a bounded sequence $(f_n)$ in $X$ such that $f_n \to 0$ pointwise, then the sequence $(A f_n)$ converges to zero in the norm of $Y$.
\end{lemma}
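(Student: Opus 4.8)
The plan is to exploit the fact that $T$ is countable and locally finite, so every compact subset of $T$ is finite; hence the topology of uniform convergence on compact sets on the space of all functions on $T$ is simply the topology of pointwise convergence, and since $T$ is countable this topology is metrizable. In such a topology compactness and sequential compactness coincide, and a bounded sequence $(f_n)$ in $X$ with $f_n \to 0$ pointwise is precisely a bounded sequence converging to $0$ in this topology. With this observation, conditions (ii) and (iii) read concretely: the closed unit ball $B_X$ of $X$ is sequentially compact for pointwise convergence, and $A$ carries pointwise-convergent sequences to pointwise-convergent sequences.

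For the forward implication, I would suppose $A$ is compact and take a bounded sequence $(f_n)$ in $X$ with $f_n \to 0$ pointwise; after rescaling we may assume $f_n \in B_X$. Arguing by contradiction, if $\|Af_n\|_Y \not\to 0$ then $\|Af_{n_k}\|_Y \ge \varepsilon$ for some $\varepsilon > 0$ along a subsequence. Since $(f_{n_k}) \subseteq B_X$ and $A(B_X)$ is relatively compact in the norm of $Y$, a further subsequence $(Af_{n_{k_j}})$ converges in the norm of $Y$ to some $g \in Y$, whence $\|g\|_Y = \lim_j \|Af_{n_{k_j}}\|_Y \ge \varepsilon$. On the other hand, by (iii) applied to $f_{n_{k_j}} \to 0$ pointwise we get $Af_{n_{k_j}} \to 0$ pointwise, while norm convergence in $Y$ forces $Af_{n_{k_j}} \to g$ pointwise; uniqueness of pointwise limits yields $g = 0$, a contradiction.

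For the converse, I would assume the stated sequential condition and let $(g_n)$ be an arbitrary sequence in $A(B_X)$, say $g_n = Af_n$ with $f_n \in B_X$. By (ii) a subsequence $(f_{n_k})$ converges pointwise to some $f \in B_X$. Then $(f_{n_k} - f)$ lies in $X$, has norm at most $2$, and converges to $0$ pointwise, so by hypothesis $\|Af_{n_k} - Af\|_Y = \|A(f_{n_k} - f)\|_Y \to 0$. Thus $(g_{n_k})$ converges in the norm of $Y$ to $Af \in A(B_X)$; since every sequence in $A(B_X)$ has a norm-convergent subsequence, $A(B_X)$ is relatively compact in $Y$ and $A$ is a compact operator.

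I expect the delicate point to be the step in the forward direction identifying the $Y$-norm limit $g$ with the pointwise limit $0$: this uses that norm convergence in $Y$ implies pointwise convergence, i.e.\ that $Y$ too is a functional Banach space with bounded point evaluations. In the applications of this criterion $Y = \Lmuinf$, for which this is exactly Lemma \ref{Lemma:Point-evaluation bound}, so the argument goes through; one should simply record this as a standing assumption on $Y$ (or absorb it into the meaning of ``Banach space of functions on a tree''). The remaining ingredients---metrizability of pointwise convergence on the countable vertex set and the coincidence of compactness with sequential compactness---are routine once one notes that compact subsets of a locally finite tree are finite.
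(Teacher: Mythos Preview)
The paper does not supply its own proof of this lemma; it is quoted verbatim from \cite[Lemma 2.5]{AllenCraig:2015} and invoked as a black box. So there is no in-paper argument to compare against.

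That said, your proposed proof is sound and is the standard one. Your reduction of ``uniform convergence on compact sets'' to pointwise convergence via the local finiteness of $T$, and the use of countability of $T$ to get metrizability (hence sequential compactness from compactness in (ii)), are exactly the right observations. Both implications are argued correctly.

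The caveat you flag is genuine and worth keeping: in the forward direction you need that norm convergence in $Y$ implies pointwise convergence, i.e.\ that point evaluations on $Y$ are continuous. The lemma as stated only imposes this on $X$ (condition (i)), not on $Y$. In the paper's applications $Y=\Lmuinf$, and the authors note just before the lemma that $\Lmuinf$ is a functional Banach space in the sense of \cite{CowenMacCluer:1995}, which builds bounded point evaluations into the definition; so the hypothesis is implicitly present. Your suggestion to record it explicitly as a standing assumption on $Y$ is the right fix if one wants the lemma to stand on its own.
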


\noindent In \cite{AllenCraig:2015}, it was determined that $\Lmuinf$ satisfies the conditions of Lemma \ref{Lemma:Compactness-criterion}.  

For a bounded operator $A$ on a Banach space $X$, the essential norm of $A$ is defined to be the distance from $A$ (in the operator norm) to the ideal of compact operators, or $$\|A\|_e=\inf\{\|A-K\|_X:K\textup{ is compact}\}.$$  It follows that $A$ is compact if and only if $\|A\|_e=0$.  To determine the essential norm of a composition operator, we will employ the following sequence of compact operators. First, for $f\in\Lmuinf$ and $n\in\N$, define a function $f_n\in\Lmuinf$ by $$f_n(v)= \begin{cases}f(v) & \text{if } |v| \leq n;\\0 & \text{if } |v| > n.\end{cases}$$  Then define the operator $A_n$ by $A_n f=f_n$.  It is easy to see that the operator is linear. The following lemma captures the other most relevant properties of these operators and we omit the proof.

\begin{lemma}\label{Lemma:compactsequence}
For each $n\in\N$, the operator $A_n$ is compact on $\Lmuinf$ with $\|A_n\|\leq 1$ and $\|I-A_n\|\leq 1.$
\end{lemma}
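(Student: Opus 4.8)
The plan is to establish the three asserted properties of $A_n$ separately. First I would verify that $A_n$ maps $\Lmuinf$ into itself and is bounded with $\|A_n\| \leq 1$: for any $f \in \Lmuinf$, the function $f_n$ agrees with $f$ on the finite set $\{v : |v| \le n\}$ and vanishes elsewhere, so $\mu(v)|f_n(v)| \le \mu(v)|f(v)|$ for every vertex $v$, whence $\|A_n f\|_\mu = \|f_n\|_\mu \le \|f\|_\mu$. Similarly, $(I - A_n)f$ is the function that vanishes on $\{v : |v| \le n\}$ and agrees with $f$ for $|v| > n$, so the same pointwise domination gives $\|(I-A_n)f\|_\mu \le \|f\|_\mu$, i.e. $\|I - A_n\| \le 1$.

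The substantive point is compactness of $A_n$. The key observation is that the range of $A_n$ consists of functions supported on the finite vertex set $S_n = \{v \in T : |v| \le n\}$ (finite because $T$ is locally finite and rooted), so $A_n$ has finite rank: its range embeds into the finite-dimensional space of functions on $S_n$. A bounded finite-rank operator on a Banach space is automatically compact, so $A_n$ is compact. Alternatively, and more in the spirit of the compactness criterion already available, I could invoke Lemma \ref{Lemma:Compactness-criterion} with $X = Y = \Lmuinf$: given a bounded sequence $(g_k)$ in $\Lmuinf$ with $g_k \to 0$ pointwise, the sequence $A_n g_k$ is supported on the finite set $S_n$, so $\|A_n g_k\|_\mu = \max_{v \in S_n} \mu(v)|g_k(v)| \to 0$ as $k \to \infty$ since each of the finitely many terms tends to $0$; hence $A_n$ is compact.

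I expect no genuine obstacle here — the result is a routine bookkeeping exercise, which is presumably why the authors omitted the proof. The only mild subtlety worth stating explicitly is that $S_n$ is finite: this uses the standing assumption that $T$ is locally finite and rooted at $o$, which guarantees that each sphere $\{v : |v| = k\}$ is finite and hence so is the ball $S_n = \bigcup_{k=0}^{n} \{v : |v| = k\}$. Once that is noted, the finite-rank argument closes the compactness claim cleanly, and the two norm bounds follow from the elementary pointwise inequalities above.
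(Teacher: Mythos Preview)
Your proposal is correct: the pointwise domination arguments give the two norm bounds immediately, and the finite-rank argument (using local finiteness of $T$ so that $S_n=\{v:|v|\le n\}$ is finite) establishes compactness of $A_n$. The paper omits the proof entirely, so there is no approach to compare against; your write-up supplies exactly the routine verification the authors elided.
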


\begin{theorem}\label{Theorem:compactness} Let $\varphi$ be a self-map of $T$ such that $C_\varphi$ is bounded on $\Lmuinf$. If $\varphi$ has infinite range, then 
\begin{equation}\label{Equation:essentialnorm}\|C_\varphi\|_e=\lim_{N \to \infty} \sup_{|\varphi(v)| \geq N} \frac{\mu(v)}{\mu(\varphi(v))}.\end{equation} Furthermore, $C_\varphi$ is compact if and only if $\varphi$ has finite range or \begin{equation}\label{limit condition}
	\lim_{N \to \infty} \sup_{|\varphi(v)| \geq N} \frac{\mu(v)}{\mu(\varphi(v))}= 0.
\end{equation}
\end{theorem}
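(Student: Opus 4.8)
The plan is to prove the essential-norm formula \eqref{Equation:essentialnorm} first, and then obtain the compactness dichotomy as an easy consequence. Write $L$ for the right-hand side of \eqref{Equation:essentialnorm}. This limit exists because the quantities $\sup_{|\varphi(v)|\ge N}\mu(v)/\mu(\varphi(v))$ are nonincreasing in $N$ and all lie in $[0,\sigma_\varphi]$, where $\sigma_\varphi<\infty$ by Theorem \ref{theorem:boundedness}; moreover, if $\varphi$ has infinite range then, since a locally finite tree has finite spheres, $\varphi(T)$ meets spheres of arbitrarily large radius, so each set $\{v:|\varphi(v)|\ge N\}$ is nonempty and these suprema are genuine.

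For the bound $\|C_\varphi\|_e\le L$, I would use the operators $C_\varphi A_n$, which are compact since $A_n$ is compact (Lemma \ref{Lemma:compactsequence}) and $C_\varphi$ is bounded. For $f\in\Lmuinf$ the function $(C_\varphi-C_\varphi A_n)f$ vanishes at every $v$ with $|\varphi(v)|\le n$ and equals $f(\varphi(v))$ when $|\varphi(v)|>n$; applying Lemma \ref{Lemma:Point-evaluation bound} and then testing on $f=1/\mu$ (exactly as in the proof of Theorem \ref{theorem:boundedness}) gives $\|C_\varphi-C_\varphi A_n\|=\sup_{|\varphi(v)|>n}\mu(v)/\mu(\varphi(v))$. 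Letting $n\to\infty$ yields $\|C_\varphi\|_e\le L$.

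The reverse inequality is the core of the argument. For each $N$, choose $v_N$ with $|\varphi(v_N)|\ge N$ and $\mu(v_N)/\mu(\varphi(v_N))$ within $1/N$ of $\sup_{|\varphi(v)|\ge N}\mu(v)/\mu(\varphi(v))$; since the lengths $|\varphi(v_N)|$ are unbounded, after passing to a subsequence I may assume the vertices $\varphi(v_N)$ are pairwise distinct while still $|\varphi(v_N)|\to\infty$ and $\limsup_N \mu(v_N)/\mu(\varphi(v_N))\ge L$. The functions $f_N(v)=\frac{1}{\mu(v)}\chi_{\varphi(v_N)}(v)$ then lie in the unit ball of $\Lmuinf$ (Lemma \ref{Lemma:chi function}) and converge to $0$ pointwise (Lemma \ref{lemma:bounded family}), with $\|C_\varphi f_N\|_\mu\ge\mu(v_N)|f_N(\varphi(v_N))|=\mu(v_N)/\mu(\varphi(v_N))$. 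For an arbitrary compact operator $K$, since $\|f_N\|_\mu=1$ the reverse triangle inequality gives $\|C_\varphi-K\|\ge\|C_\varphi f_N\|_\mu-\|Kf_N\|_\mu$ for every $N$, so the whole matter reduces to showing $\|Kf_N\|_\mu\to 0$. I expect this to be the main obstacle: $(f_N)$ need not converge weakly, and $A_n$ does not converge strongly to the identity on $\Lmuinf$, so the usual shortcuts are unavailable. The device I would use is an averaging trick. Given any subsequence, compactness of $K$ produces a further subsequence $(f_{N_k})$ with $Kf_{N_k}\to h$ in norm; because the vertices $\varphi(v_{N_k})$ are distinct, the averages $g_m=\frac{1}{m}\sum_{k=1}^m f_{N_k}$ satisfy $\mu(v)|g_m(v)|\le 1/m$ at every vertex $v$, hence $\|g_m\|_\mu\le 1/m$ and $Kg_m=\frac{1}{m}\sum_{k=1}^m Kf_{N_k}\to 0$ in norm; but this Cesàro mean also converges to $h$, forcing $h=0$. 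Thus every subsequence of $(\|Kf_N\|_\mu)$ has a subsequence tending to $0$, so $\|Kf_N\|_\mu\to0$, whence $\|C_\varphi-K\|\ge\limsup_N\mu(v_N)/\mu(\varphi(v_N))\ge L$. Taking the infimum over all compact $K$ completes the proof of \eqref{Equation:essentialnorm}.

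Finally, the compactness statement follows. If $\varphi$ has infinite range, then $C_\varphi$ is compact precisely when $\|C_\varphi\|_e=0$, which by \eqref{Equation:essentialnorm} is exactly condition \eqref{limit condition}. If $\varphi$ has finite range $\varphi(T)=\{u_1,\dots,u_p\}$, then $C_\varphi f=\sum_{i=1}^p f(u_i)\,\chi_{\varphi^{-1}(u_i)}$; boundedness of $C_\varphi$ forces $\mu(v)\le\sigma_\varphi\max_i\mu(u_i)$ for all $v$, so each $\chi_{\varphi^{-1}(u_i)}$ belongs to $\Lmuinf$ and $C_\varphi$ has rank at most $p$, hence is compact. (When $\varphi$ has finite range the sets in \eqref{limit condition} are eventually empty, so \eqref{limit condition} holds vacuously, and the two alternatives in the statement are consistent.)
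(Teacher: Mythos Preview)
Your proof is correct and shares the paper's overall architecture---upper bound via the compact approximants $C_\varphi A_n$, lower bound via the normalized characteristic functions $f_N=\frac{1}{\mu}\chi_{\varphi(v_N)}$---but differs in three places. First, for the upper bound you compute $\|C_\varphi-C_\varphi A_n\|$ exactly by testing on $1/\mu$, whereas the paper splits the estimate into pieces $R_N(n)$ and $S_N(n)$ according to whether $|\varphi(v)|\ge N$ or $|\varphi(v)|\le N$; your route is shorter and gives the same limit. Second, and more substantively, for the key step $\|Kf_N\|_\mu\to 0$ the paper simply invokes Lemma~\ref{Lemma:Compactness-criterion}, while you supply a self-contained Ces\`aro-averaging argument exploiting that the $f_N$ are disjointly supported with $\mu|f_N|\le 1$. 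Your caution here is warranted: Lemma~\ref{Lemma:Compactness-criterion} as stated carries hypothesis~(iii) on the operator, which is not verified for an arbitrary compact $K$, so your argument is on firmer ground (and in effect proves that disjoint normalized point masses tend to $0$ weakly in $\Lmuinf$, which is what actually makes the paper's conclusion true). Your remark that ``$(f_N)$ need not converge weakly'' is, however, overly pessimistic---your own averaging trick shows they do. Third, in the finite-range case you observe that $C_\varphi$ has finite rank (using that boundedness forces $\mu$ to be bounded), which is more direct than the paper's appeal to the sequential criterion.
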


\begin{proof}
To determine the essential norm of $C_\varphi$, we assume $\varphi$ has infinite range so that the limit in equation \eqref{Equation:essentialnorm} is defined.  Observe $C_\varphi A_n$ is compact for all $n \in \N$ since $C_\varphi$ is bounded and $A_n$ is compact from Lemma \ref{Lemma:compactsequence}.  From the definition of the essential norm, we have \begin{equation}
	\label{Inequality:essentialnormupper}\|C_\varphi\|_e\leq \|C_\varphi-C_\varphi A_n\|=\sup_{\|f\|_\mu\leq1}\sup_{v\in T}\mu(v)\left|(C_\varphi(I-A_n)f)(v)\right|\end{equation} for every $n\in\N$.  Now fix $N\in\N$. We define $$R_N(n)=\sup_{\|f\|_\mu\leq1}\sup_{|\varphi(v)|\geq N}\mu(v)\left|(C_\varphi(I-A_n)f)(v)\right|$$ and $$S_N(n)= \sup_{\|f\|_\mu\leq1}\sup_{|\varphi(v)|\leq N}\mu(v)\left|(C_\varphi(I-A_n)f)(v)\right|.$$  Then, from inequality \eqref{Inequality:essentialnormupper} we obtain $$\|C_\varphi\|_e\leq \max\{R_N(n),S_N(n)\}$$ for each $n, N\in \N$.  We now consider the case $n >N$. From Lemma \ref{Lemma:compactsequence}, we obtain  $$\begin{aligned}R_N(n)&=\sup_{\|f\|_\mu\leq1}\sup_{|\varphi(v)|\geq N}\frac{\mu(v)}{\mu(\varphi(v))}\mu(\varphi(v))\left|(C_\varphi(I-A_n)f)(v)\right| \\
	&=\sup_{\|f\|_\mu\leq1}\sup_{|\varphi(v)|\geq N}\frac{\mu(v)}{\mu(\varphi(v))}\mu(\varphi(v))\left|((I-A_n)f)(\varphi(v))\right|\\
	&\leq\sup_{\|f\|_\mu\leq1}\sup_{|\varphi(v)|\geq N}\frac{\mu(v)}{\mu(\varphi(v))}\sup_{w\in T}\mu(w)\left|((I-A_n)f)(w)\right|\\
	&=\sup_{|\varphi(v)|\geq N}\frac{\mu(v)}{\mu(\varphi(v))}\sup_{\|f\|_\mu\leq1}\sup_{w\in T}\mu(w)\left|((I-A_n)f)(w)\right|\\
	&=\sup_{|\varphi(v)|\geq N}\frac{\mu(v)}{\mu(\varphi(v))}\|I-A_n\|\\
	&\leq \sup_{|\varphi(v)|\geq N}\frac{\mu(v)}{\mu(\varphi(v))}.
	\end{aligned}$$ 
	
	\noindent Next, observe that $$\begin{aligned}S_N(n)&=\sup_{\|f\|_\mu\leq1}\sup_{|\varphi(v)|\leq N}\frac{\mu(v)}{\mu(\varphi(v))}\mu(\varphi(v))\left|(C_\varphi(I-A_n)f)(v)\right| \\
	&=\sup_{\|f\|_\mu\leq1}\sup_{|\varphi(v)|\leq N}\frac{\mu(v)}{\mu(\varphi(v))}\mu(\varphi(v))\left|((I-A_n)f)(\varphi(v))\right| \\
	&\leq \sup_{\|f\|_\mu\leq1}\sup_{|\varphi(v)|\leq N}\frac{\mu(v)}{\mu(\varphi(v))}\sup_{|w|\leq N}\mu(w)\left|((I-A_n)f)(w)\right|. \\
	\end{aligned}$$  If $|w|\leq N$ and $n>N$, then $((I-A_n)f)(w)=0$ and we have $S_N(n)=0$.  Thus, for $n> N$, $$\|C_\varphi\|_e\leq \max\{R_N(n),S_N(n)\}\leq R_N(n)\leq\sup_{|\varphi(v)|\geq N}\frac{\mu(v)}{\mu(\varphi(v))}.$$ This estimate holds for all $N\in \N$, and hence $$\|C_\varphi\|_e\leq \lim_{N\rightarrow\infty} \sup_{|\varphi(v)|\geq N}\frac{\mu(v)}{\mu(\varphi(v))}.$$
	
	Now assume the essential norm of $C_\varphi$ is strictly less than the limit in equation \eqref{Equation:essentialnorm}.  Then there is a compact operator $K$ and constant $s>0$ such that $$\|C_\varphi-K\|<s<\lim_{N\rightarrow\infty} \sup_{|\varphi(v)|\geq N}\frac{\mu(v)}{\mu(\varphi(v))}.$$  Moreover, we can find a sequence of vertices $(v_n)$ with $|\varphi(v_n)|\rightarrow\infty$ such that \begin{equation}\label{equation:essentialnorm}\limsup_{n\rightarrow\infty}\frac{\mu(v_n)}{\mu(\varphi(v_n))}>s.\end{equation}  Now, define the sequence of functions $(f_n)$ by $$f_n(v)=\frac{1}{\mu(v)}\bigchi_{\varphi(v_{n})}(v).$$  By Lemmas \ref{Lemma:chi function} and \ref{lemma:bounded family}, this is a sequence of unit vectors in $\Lmuinf$ converging to zero pointwise.  We also have the lower estimate, $$s>\|C_\varphi-K\|\geq \|(C_\varphi-K)f_n\|_\mu\geq \|C_\varphi f_n\|_\mu-\|Kf_n\|_\mu.$$ By Lemma \ref{Lemma:Compactness-criterion}, $\|Kf_n\|_\mu\rightarrow 0$ as $n \to \infty$, and thus $$\begin{aligned}s &\geq\limsup_{n\rightarrow \infty}\left(\|C_\varphi f_n\|_\mu-\|Kf_n\|_\mu\right)\\
	&=\limsup_{n\rightarrow \infty}\|C_\varphi f_n\|_\mu\\
	&\geq\limsup_{n\rightarrow \infty} \mu(v_n)|f_n(\varphi(v_n))|\\
	&=\limsup_{n\rightarrow \infty} \frac{\mu(v_n)}{\mu(\varphi(v_n))}\\
	&>s,
	\end{aligned}$$ which is a contradiction.

    Next, suppose $C_\varphi$ is compact on $\Lmuinf$. If $\varphi$ has finite range, then we are done. If $\varphi$ has infinite range, then equation \eqref{limit condition} holds since the essental norm of a compact operator is zero.  %Now suppose $\varphi$ has infinite range.  Then the limit in equation (\ref{limit condition})  is defined, and must exist.  Assume the limit in equation (\ref{limit condition}) is not 0.  Then there exists $\varepsilon > 0$ and sequence of vertices $(v_n)$ with $|\varphi(v_n)| \to \infty$ as $n \to \infty$ and $\displaystyle\frac{\mu(v_n)}{\mu(\varphi(v_n))} \geq \varepsilon$ for all $n \in \N$.  %We choose a subsequence $(v_{n_k})$ such that $|\varphi(v_{n_k})| \geq k$ for all $k \in \N$.  
    %Define the sequence of functions $(f_n)$ by $\displaystyle f_n(v) = \frac{1}{\mu(v)}\bigchi_{\varphi(v_n)}(v)$.  By Lemma \ref{lemma:bounded family}, $(f_n)$ is a bounded sequence in $\Lmuinf$ converging to 0 pointwise.  So by Lemma \ref{Lemma:Compactness-criterion}, $\|C_\varphi f_n\|_\mu \to 0$ as $n \to \infty$.  However, $$\|C_\varphi f_n\|_\mu \geq \mu(v_n)\frac{1}{\mu(\varphi(v_n))} \geq \varepsilon,$$ which is a contradiction.  Thus, if $C_\varphi$ is compact, then $\varphi$ has finite range or $$\lim_{N \to \infty} \sup_{|\varphi(v)| \geq N} \frac{\mu(v)}{\mu(\varphi(v))} = 0.$$

    For the converse, first suppose $\varphi$ has infinite range and equation \eqref{limit condition} holds.  Then $C_\varphi$ is compact by equation \eqref{Equation:essentialnorm}. Finally, suppose $\varphi$ has finite range, i.e. there exists positive constant $M$ such that $|\varphi(v)| \leq M$ for all $v \in T$.  Let $(f_n)$ be a bounded sequence in $\Lmuinf$ converging to 0 pointwise and fix $\varepsilon > 0$.  Since $\varphi(T)$ is finite, there exists a positive constant $m$ such that $\displaystyle\sup_{w \in \varphi(T)} \mu(w)  \leq m$.  Also, the pointwise convergence of $(f_n)$ to 0 is uniform on $\varphi(T)$.  Thus, for sufficiently large $n$, we have $\displaystyle\sup_{w \in \varphi(T)} |f_n(w)| < \frac{\varepsilon}{m \sigma_\psi}$.  With these observations, we see for such $n$,
    $$\begin{aligned}
    \|C_\varphi f_n\|_\mu &= \sup_{v \in T} \mu(v)|f_n(\varphi(v_n))|\\
    &= \sup_{v \in T} \frac{\mu(v)}{\mu(\varphi(v))} \mu(\varphi(v)) |f_n(\varphi(n)))|\\
    &\leq \sigma_\varphi \sup_{w \in \varphi(T)} \mu(\varphi(v)) |f_n(\varphi(n)))|\\
    &\leq m\sigma_\varphi \sup_{w \in \varphi(T)} |f_n(\varphi(n))|\\
    &< \varepsilon.
    \end{aligned}$$  So $\|C_\varphi f_n\| \to 0$ as $n \to \infty$.  Thus by Lemma \ref{Lemma:Compactness-criterion} $C_\varphi$ is compact on $\Lmuinf$.
\end{proof}

\begin{corollary} The bounded composition operator $C_\varphi$ on $\Linf$ has essential norm 0 or 1.  Furthermore, $C_\varphi$ is compact on $\Linf$ if and only if $\varphi$ has finite range.
\end{corollary}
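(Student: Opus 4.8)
The plan is to specialize Theorem~\ref{Theorem:compactness} to the constant weight $\mu \equiv 1$, for which $\Lmuinf = \Linf$. First I would note that every self-map $\varphi$ of $T$ induces a bounded composition operator on $\Linf$, so Theorem~\ref{Theorem:compactness} applies; and that for $\mu \equiv 1$ the ratio $\mu(v)/\mu(\varphi(v))$ equals $1$ for every $v \in T$. Consequently the quantity $\sup_{|\varphi(v)| \geq N} \mu(v)/\mu(\varphi(v))$ appearing in equations~\eqref{Equation:essentialnorm} and~\eqref{limit condition} is a supremum of a set that is either empty or equal to $\{1\}$.

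Next I would split into the two cases of the theorem. If $\varphi$ has finite range, then Theorem~\ref{Theorem:compactness} already gives that $C_\varphi$ is compact, hence $\|C_\varphi\|_e = 0$. If $\varphi$ has infinite range, then since $T$ is locally finite the set $\{|\varphi(v)| : v \in T\}$ is unbounded, so for every $N \in \N$ there is a vertex $v$ with $|\varphi(v)| \geq N$; therefore $\sup_{|\varphi(v)| \geq N} \mu(v)/\mu(\varphi(v)) = 1$ for all $N$, and passing to the limit in equation~\eqref{Equation:essentialnorm} yields $\|C_\varphi\|_e = 1$. This establishes the dichotomy $\|C_\varphi\|_e \in \{0,1\}$.

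For the compactness statement I would invoke that a bounded operator is compact if and only if its essential norm is $0$, together with the computation above: the essential norm is $0$ exactly when $\varphi$ has finite range, the infinite-range case always producing essential norm $1 \neq 0$. (Equivalently, this follows directly from the final sentence of Theorem~\ref{Theorem:compactness}, since condition~\eqref{limit condition} now reads $\lim_{N \to \infty} 1 = 0$, which fails.) I do not anticipate any genuine obstacle here; the only point needing a moment's care is checking that, when $\varphi$ has infinite range, the supremum over $\{v : |\varphi(v)| \geq N\}$ really equals $1$ for every $N$, which is exactly the local finiteness of $T$ forcing the vertex lengths in the range to be unbounded.
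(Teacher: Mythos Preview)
Your proposal is correct and is exactly the intended specialization: the paper states the corollary without proof, and your argument---setting $\mu\equiv 1$ so that $\mu(v)/\mu(\varphi(v))\equiv 1$, then using local finiteness to see that infinite range forces the supremum in \eqref{Equation:essentialnorm} to equal $1$ for every $N$---is precisely how it falls out of Theorem~\ref{Theorem:compactness}. The one point you flagged as needing care (that infinite range implies unbounded $|\varphi(v)|$ by local finiteness) is indeed the only nontrivial step, and you handle it correctly.
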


One would expect that the characterization for compactness would be the ``little-oh" condition $\displaystyle\lim_{|v| \to \infty} \frac{\mu(v)}{\mu(\varphi(v))} = 0$ derived from the ``big-oh" condition of boundedness.  However, the following example shows this not to be true. 

\begin{example} Suppose the weight is defined as 
	$$\mu(v) = \begin{cases} 1 & \text{if } v=o,\\
	|v| & \text{if } |v| = 2n \text{ for some } n \geq 1,\\
	1 & \text{if } |v| = 2n-1 \text{ for some } n \geq 1.
	\end{cases}$$ As a matter of notation, for $n \in \N$ the vertex $w^n$ denotes a vertex of length $n$.  Fix a vertex $v_0$ such that $|v_0| = 1$ and define $\varphi:T\to T$ as
	$$\varphi(v) = \begin{cases}
	o & \text{if } v = o,\\
	w^{|v|^2} & \text{if } |v| = 2n \text{ for some } n \geq 1,\\
	v_0 & \text{if } |v| = 2n-1 \text{ for some } n \geq 1.
	\end{cases}$$
	
Observe that for vertex $v \in T$ with $|v| = 2n$ for some $n \geq 1$, we have $\displaystyle\frac{\mu(v)}{\mu(\varphi(v))} = \frac{1}{|v|}$.  For $v \in T$ with $|v| = 2n-1$ for some $n \geq 1$ we have $\displaystyle\frac{\mu(v)}{\mu(\varphi(v))} = 1$. Thus $$\lim_{N \to \infty} \sup_{|\varphi(v)| \geq N} \frac{\mu(v)}{\mu(\varphi(v))} = 0,$$ since $|\varphi(v)| \geq N$ implies $|v| = 2n$ for some $n \geq 1$.  Thus by Theorem \ref{Theorem:compactness}, $C_\varphi$ is compact on $\Lmuinf$.  However, $$\lim_{|v|\to\infty} \frac{\mu(v)}{\mu(\varphi(v))} \neq 0,$$ as can been seen by taking a sequence of vertices $(v_n)$ with $|v_n| = 2n-1$ for all $n \in \N$. 
\end{example}

\section{Isometries}\label{Section:Isometries}
In this section we investigate the isometric composition operators on $\Lmuinf$.  We provide sufficient conditions as well as necessary conditions for $C_\varphi$ to be an isometry.  In the case of $\Linf$, we obtain a characterization of the isometric composition operators.

\begin{theorem}\label{Theorem:sufficient} Let $\varphi$ be a self-map of a tree $T$ such that $C_\varphi$ is bounded on $\Lmuinf$.  If $\varphi$ is surjective and $\displaystyle\frac{\mu(v)}{\mu(\varphi(v))} = 1$ for all $v \in T$, then $C_\varphi$ is an isometry.
\end{theorem}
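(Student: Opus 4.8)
The plan is to prove directly that $\|C_\varphi f\|_\mu = \|f\|_\mu$ for every $f \in \Lmuinf$ by establishing the two inequalities separately. The upper bound is immediate: the hypothesis $\frac{\mu(v)}{\mu(\varphi(v))} = 1$ for all $v \in T$ gives $\sigma_\varphi = 1$, so Theorem \ref{theorem:boundedness} yields $\|C_\varphi\| = 1$ and hence $\|C_\varphi f\|_\mu \le \|f\|_\mu$ for all $f$; alternatively this is read off directly from inequality \eqref{Equation:C_varphi-upper-bound} in the proof of that theorem.

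For the reverse inequality I would fix $f \in \Lmuinf$ and $\varepsilon > 0$, and choose a vertex $w \in T$ with $\mu(w)|f(w)| > \|f\|_\mu - \varepsilon$. This is where surjectivity enters: pick $v \in T$ with $\varphi(v) = w$. Then, using $\mu(v) = \mu(\varphi(v))$,
$$\|C_\varphi f\|_\mu = \sup_{u \in T} \mu(u)|f(\varphi(u))| \ge \mu(v)|f(\varphi(v))| = \mu(\varphi(v))|f(\varphi(v))| = \mu(w)|f(w)| > \|f\|_\mu - \varepsilon.$$
Letting $\varepsilon \to 0$ gives $\|C_\varphi f\|_\mu \ge \|f\|_\mu$, and combining with the upper bound yields $\|C_\varphi f\|_\mu = \|f\|_\mu$, so $C_\varphi$ is an isometry. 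A slightly cleaner variant avoids the $\varepsilon$ entirely: since $v \mapsto \varphi(v)$ is onto, $\{\mu(v)|f(\varphi(v))| : v \in T\} = \{\mu(w)|f(w)| : w \in T\}$ as sets of real numbers, so the two suprema coincide outright.

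Honestly there is no real obstacle here; the proof is just an unwinding of definitions. The one point worth emphasizing is the essential role of surjectivity: the supremum defining $\|C_\varphi f\|_\mu$ only ``sees'' the values of $f$ on the range $\varphi(T)$, and surjectivity is exactly what guarantees this range is all of $T$, so that $\|f\|_\mu$ is recovered. Without it one only obtains $\|C_\varphi f\|_\mu = \sup_{w \in \varphi(T)} \mu(w)|f(w)|$, which can be strictly smaller than $\|f\|_\mu$.
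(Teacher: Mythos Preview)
Your proof is correct and follows essentially the same idea as the paper's. The paper presents it as a single chain of equalities---multiplying and dividing by $\mu(\varphi(v))$, using $\mu(v)/\mu(\varphi(v))=1$, and then invoking surjectivity to replace $\sup_{v\in T}\mu(\varphi(v))|f(\varphi(v))|$ by $\sup_{w\in T}\mu(w)|f(w)|$---which is exactly your ``cleaner variant''; your two-inequality version with the $\varepsilon$ argument is just a slightly more explicit unpacking of the same step.
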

	
\begin{proof} Let $f \in \Lmuinf$.  Then 
$$\begin{aligned}
\|C_\varphi f\|_\mu &= \sup_{v \in T} \mu(v)|f(\varphi(v))|\\
&= \sup_{v \in T} \frac{\mu(v)}{\mu(\varphi(v))}\mu(\varphi(v))|f(\varphi(v))|\\
&= \sup_{v \in T} \mu(\varphi(v))|f(\varphi(v))|\\
&= \sup_{w \in T} \mu(w)|f(w)|\\
&= \|f\|_\mu.
\end{aligned}$$  Therefore $C_\varphi$ is an isometry on $\Lmuinf$.
\end{proof}

\begin{theorem}\label{Theorem:necessary} Let $\varphi$ be a self-map of a tree $T$.  If $C_\varphi$ is an isometry on $\Lmuinf$, then $\varphi$ is surjective and $\displaystyle\sup_{v \in T} \frac{\mu(v)}{\mu(\varphi(v))} = 1$.
\end{theorem}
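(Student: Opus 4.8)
The plan is to handle the two conclusions separately, each by a short argument resting on results already in hand.

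For the norm condition, the key observation is that an isometry on the nonzero space $\Lmuinf$ is in particular a bounded operator with operator norm exactly $1$. Thus $C_\varphi$ is bounded, and Theorem \ref{theorem:boundedness} applies and gives $\sigma_\varphi = \|C_\varphi\| = 1$; that is, $\displaystyle\sup_{v\in T}\frac{\mu(v)}{\mu(\varphi(v))} = 1$, which is precisely the second assertion.

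For surjectivity I would argue by contradiction. Suppose $\varphi$ is not onto, so there is a vertex $w\in T$ with $w\notin\varphi(T)$. By Lemma \ref{Lemma:chi function}, the function $f$ defined by $f(v)=\frac{1}{\mu(v)}\chi_w(v)$ belongs to $\Lmuinf$ with $\|f\|_\mu=1$. Since $\varphi(v)\neq w$ for every $v\in T$, we have $(C_\varphi f)(v)=f(\varphi(v))=\frac{1}{\mu(\varphi(v))}\chi_w(\varphi(v))=0$ for all $v$, hence $\|C_\varphi f\|_\mu=0$. This contradicts the isometry condition $\|C_\varphi f\|_\mu=\|f\|_\mu=1$, so $\varphi$ must be surjective.

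There is essentially no obstacle: both halves reduce at once to facts already established, namely Theorem \ref{theorem:boundedness} and Lemma \ref{Lemma:chi function} together with the boundedness of the point-evaluation functionals that guarantees these normalized characteristic functions have unit norm. The only point worth stating with a little care is that an isometry automatically has operator norm $1$, so that the hypothesis of boundedness needed to invoke Theorem \ref{theorem:boundedness} is free.
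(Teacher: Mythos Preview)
Your proof is correct and follows essentially the same route as the paper's: the norm condition comes immediately from $\|C_\varphi\|=1$ together with Theorem~\ref{theorem:boundedness}, and surjectivity is obtained by applying $C_\varphi$ to the normalized characteristic function $f(v)=\frac{1}{\mu(v)}\chi_w(v)$ at a missed vertex $w$ and observing that the result vanishes identically. Your explicit remark that boundedness of $C_\varphi$ is automatic (so that Theorem~\ref{theorem:boundedness} legitimately applies) is a worthwhile point of care.
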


\begin{proof}
Suppose $C_\varphi$ is an isometry on $\Lmuinf$.  Thus $\|C_\varphi\| = 1$.  By Theorem \ref{theorem:boundedness}, we have $\displaystyle\sup_{v \in T} \displaystyle\frac{\mu(v)}{\mu(\varphi(v))} = 1$. 
	
Assume $\varphi$ is not surjective.  Thus, there exists vertex $w \not\in \varphi(T)$.  Define the function $f(v) = \displaystyle\frac{1}{\mu(v)}\bigchi_w(v)$.  By Lemma \ref{Lemma:chi function}, $f \in \Lmuinf$ with $\|f\|_\mu = 1$.  Since $C_\varphi$ is an isometry, then $\|C_\varphi f\|_\mu = \|f\|_\mu = 1$.  However, we arrive at a contradiction since
$$\begin{aligned}
\|C_\varphi f\|_\mu &= \sup_{v \in T} \mu(v)|f(\varphi(v))|\\
&= \sup_{v \in T} \frac{\mu(v)}{\mu(\varphi(v))}\chi_w(\varphi(v))\\
&\leq \sup_{v \in T} \bigchi_w(\varphi(v))\\ 
&= 0,\end{aligned}$$ and thus $C_\varphi$ is not an isometry.  So $\varphi$ is surjective.
\end{proof}

The following example shows that the necessary conditions of Theorem \ref{Theorem:necessary} are not sufficient to induce an isometric composition operator on $\Lmuinf$.

\begin{example} Suppose $\mu(v) = |v|$ and $\mu(o)=1$ and define $$\varphi(v) = \begin{cases} v^- & \text{if } v \in T^*,\\o & \text{if } v = o.\end{cases}$$  Then $\varphi$ is surjective, as the tree has no terminal vertices.  Also, we have
$$\sup_{v \in T} \frac{\mu(v)}{\mu(\varphi(v))} = \sup_{v \in T} \frac{|v|}{|v|-1} = 1.$$  However, $C_\varphi$ is not an isometry on $\Lmuinf$.  Fix a vertex $w \in T^*$, and consider $\bigchi_w$.  On the one hand
$$\|\bigchi_w\|_\mu = \sup_{v \in T} \mu(v)\bigchi_w(v) = |w|$$ where as 
$$\|C_\varphi \bigchi_w\|_\mu = \sup_{v \in T} \mu(v)\bigchi_w(\varphi(v)) = |w|+1.$$
\end{example}

Although our conditions are not both necessary and sufficient for $C_\varphi$ being an isometry on $\Lmuinf$, we are able to conclude necessary and sufficient conditions for $C_\varphi$ to be an isometry on $\Linf$.

\begin{corollary}\label{Corollary:isometry} The bounded composition operator $C_\varphi$ is an isometry on $\Linf$ if and only if $\varphi$ is surjective.
\end{corollary}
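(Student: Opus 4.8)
The plan is to obtain this as an immediate consequence of Theorems~\ref{Theorem:sufficient} and~\ref{Theorem:necessary}, specialized to the constant weight $\mu\equiv 1$. Recall first that on $\Linf$ every self-map $\varphi$ induces a bounded composition operator (indeed $\|C_\varphi\|=1$), so the standing boundedness hypotheses in both theorems are automatically satisfied and require no separate verification. The whole point is that with $\mu\equiv 1$ the quantity $\mu(v)/\mu(\varphi(v))$ equals $1$ at every vertex, so the ratio conditions appearing in Theorems~\ref{Theorem:sufficient} and~\ref{Theorem:necessary} become vacuous, and surjectivity of $\varphi$ is the only remaining condition.

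For the forward direction, I would assume $C_\varphi$ is an isometry on $\Linf$ and apply Theorem~\ref{Theorem:necessary} with $\mu\equiv 1$; this yields at once that $\varphi$ is surjective (the second conclusion, $\sup_{v\in T}\mu(v)/\mu(\varphi(v))=1$, holds trivially and carries no information here). For the converse, I would assume $\varphi$ is surjective; since $\mu(v)=1$ for all $v\in T$, we have $\mu(v)/\mu(\varphi(v))=1$ for all $v\in T$, so the hypotheses of Theorem~\ref{Theorem:sufficient} are met and $C_\varphi$ is an isometry on $\Linf$. Combining the two implications gives the stated equivalence.

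There is no genuine obstacle in this argument; the only thing worth flagging is the contrast with the preceding example on $\Lmuinf$, where surjectivity together with $\sup_{v\in T}\mu(v)/\mu(\varphi(v))=1$ failed to force an isometry. In the $\Linf$ setting the ratio is identically $1$ rather than merely having supremum $1$, and it is precisely this strengthening that closes the gap between the necessary and sufficient conditions.
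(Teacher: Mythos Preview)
Your proposal is correct and is exactly the argument the paper intends: the corollary is stated without proof immediately after Theorems~\ref{Theorem:sufficient} and~\ref{Theorem:necessary}, and your specialization to $\mu\equiv 1$---making the ratio identically $1$ so that surjectivity is the only substantive condition in either direction---is precisely how it follows. Your remark distinguishing the identically-$1$ ratio here from the merely supremum-$1$ situation in the preceding example is also on point.
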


If we add the assumption that $\varphi$ is injective, then we obtain necessary and sufficient conditions for $C_\varphi$ to be an isometry on $\Lmuinf$.

\begin{theorem}\label{Theorem:injective}
Let $\varphi$ be an injective self-map of a tree $T$.  Then $C_\varphi$ is an isometry on $\Lmuinf$ if and only if $\varphi$ is surjective and $\displaystyle\frac{\mu(v)}{\mu(\varphi(v))} = 1$ for all $v \in T$.
\end{theorem}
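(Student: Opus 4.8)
The plan is to dispatch the two implications separately, leaning heavily on the already-established Theorems \ref{Theorem:sufficient} and \ref{Theorem:necessary}; the injectivity hypothesis will enter only to sharpen the necessary condition $\sup_{v}\mu(v)/\mu(\varphi(v))=1$ into the pointwise identity $\mu(v)/\mu(\varphi(v))=1$.

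For the ``if'' direction there is essentially nothing new to prove: if $\mu(v)/\mu(\varphi(v))=1$ for every $v\in T$, then $\sigma_\varphi=1<\infty$, so $C_\varphi$ is bounded by Theorem \ref{theorem:boundedness}, and Theorem \ref{Theorem:sufficient} then applies verbatim (its surjectivity hypothesis is part of our assumption), yielding that $C_\varphi$ is an isometry. I would record this in a line or two.

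For the ``only if'' direction I would start from the conclusions of Theorem \ref{Theorem:necessary}: assuming $C_\varphi$ is an isometry, $\varphi$ is surjective, and since $\varphi$ is also injective it is a bijection of $T$; moreover $\sup_{v\in T}\mu(v)/\mu(\varphi(v))=1$, so $\mu(v)/\mu(\varphi(v))\le 1$ for every $v$. To obtain the reverse inequality at an arbitrary vertex $v$, set $w=\varphi(v)$ and test $C_\varphi$ against the normalized characteristic function $f(u)=\frac{1}{\mu(u)}\bigchi_w(u)$, which has $\|f\|_\mu=1$ by Lemma \ref{Lemma:chi function}. Because $\varphi$ is injective, $\varphi(u)=w$ has the unique solution $u=v$, so the supremum defining $\|C_\varphi f\|_\mu=\sup_{u\in T}\mu(u)\,|f(\varphi(u))|$ collapses to the single term $\mu(v)/\mu(\varphi(v))$. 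The isometry property forces $\mu(v)/\mu(\varphi(v))=1$, and since $v$ was arbitrary this finishes the argument.

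The only delicate point---and the sole place injectivity is used---is the collapse of that supremum to one fiber. Without injectivity, $\|C_\varphi f\|_\mu$ would equal $\sup\{\mu(u)/\mu(\varphi(u)):\varphi(u)=w\}$, which can strictly exceed $1$ even when $\sup_v \mu(v)/\mu(\varphi(v))$ still equals $1$; the example immediately preceding this theorem (with $\varphi(v)=v^-$) exhibits exactly this phenomenon. So I anticipate no genuine obstacle, only the need to state carefully that the fiber $\varphi^{-1}(w)$ is a singleton.
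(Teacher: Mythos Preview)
Your proof is correct and matches the paper's approach: both test against the (normalized) characteristic function supported at $\varphi(v)$ and use injectivity to collapse $\|C_\varphi f\|_\mu$ to the single term $\mu(v)/\mu(\varphi(v))$; the paper merely phrases this by contradiction with a scaled version of the same test function. One small slip in your closing commentary: the fiber supremum $\sup\{\mu(u)/\mu(\varphi(u)):\varphi(u)=w\}$ can never \emph{exceed} the global supremum $1$; the actual obstruction without injectivity is that this fiber supremum could equal $1$ while $\mu(v)/\mu(\varphi(v))<1$ for your chosen $v$, so the test function would fail to isolate that particular ratio.
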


\begin{proof}
From Theorems \ref{Theorem:sufficient} and \ref{Theorem:necessary}, it suffices to show that if $C_\varphi$ is an isometry, then $\displaystyle\frac{\mu(v)}{\mu(\varphi(v))} = 1$ for all $v \in T$.  Suppose $\varphi$ is an injective map that induces an isometry $C_\varphi$ on $\Lmuinf$.  Additionally, assume there exists a vertex $x \in T$ such that $\displaystyle\frac{\mu(x)}{\mu(\varphi(x))} < 1$.  The function $f(v) = \displaystyle\frac{\mu(x)}{\mu(v)}\bigchi_{\varphi(x)}(v)$ is an element of $\Lmuinf$ with $\|f\|_\mu = \mu(x)$.  However, 
$$\begin{aligned}
\|C_\varphi f\|_\mu &= \sup_{v \in T} \mu(v) |f(\varphi(v))|\\
&= \sup_{v \in T} \frac{\mu(v)\mu(x)}{\mu(\varphi(v))} \bigchi_{\varphi(x)}(\varphi(v))\\
&= \frac{\mu(x)^2}{\mu(\varphi(x))}\\
&< \mu(x),
\end{aligned}$$ which contradicts $C_\varphi$ being an isometry.  Thus $\displaystyle\frac{\mu(v)}{\mu(\varphi(v))} = 1$ for all $v \in T$, as desired.  
\end{proof}

Lastly, we provide necessary conditions for $C_\varphi$ to be a surjective isometry.  The appeal of this result is the interplay between the point-evaulation functions and the adjoint of $C_\varphi$.

\begin{theorem} If $C_\varphi$ is a surjective isometry, then $\varphi$ is bijective and $\displaystyle\frac{\mu(v)}{\mu(\varphi(v))} = 1$ for all $v \in T$.
\end{theorem}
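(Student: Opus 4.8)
The plan is to pass to the adjoint operator $C_\varphi^*$ on the dual space $(\Lmuinf)^*$ and track its action on the point evaluation functionals $K_v$. First I would record the standard identity $C_\varphi^* K_v = K_{\varphi(v)}$ for every $v \in T$: for any $f \in \Lmuinf$ we have $(C_\varphi^* K_v)(f) = K_v(C_\varphi f) = (f\circ\varphi)(v) = f(\varphi(v)) = K_{\varphi(v)}(f)$, so the two functionals agree.

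Next I would observe that if $C_\varphi$ is a surjective isometry, then it is invertible and $C_\varphi^{-1}$ is again an isometry (if $g = C_\varphi f$ then $\|C_\varphi^{-1}g\|_\mu = \|f\|_\mu = \|C_\varphi f\|_\mu = \|g\|_\mu$). Hence $C_\varphi^*$ is invertible with $(C_\varphi^*)^{-1} = (C_\varphi^{-1})^*$, and $\|(C_\varphi^*)^{-1}\| = \|C_\varphi^{-1}\| = 1$; together with $\|C_\varphi^*\| = \|C_\varphi\| = 1$ this gives, for every $\psi \in (\Lmuinf)^*$, the chain $\|\psi\| = \|(C_\varphi^*)^{-1}C_\varphi^*\psi\| \le \|C_\varphi^*\psi\| \le \|\psi\|$, so $C_\varphi^*$ is itself a surjective isometry.

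The conclusions then follow quickly. Surjectivity of $\varphi$ is already furnished by Theorem \ref{Theorem:necessary}. For injectivity, suppose $\varphi(v) = \varphi(w)$; then $C_\varphi^* K_v = K_{\varphi(v)} = K_{\varphi(w)} = C_\varphi^* K_w$, and since $C_\varphi^*$ is injective we get $K_v = K_w$, whence $v = w$ by Lemma \ref{Lemma:equal functionals}. Thus $\varphi$ is bijective. For the weight identity, apply the isometry of $C_\varphi^*$ to $K_v$ and use the formula $\|K_v\| = 1/\mu(v)$ established earlier: $\frac{1}{\mu(\varphi(v))} = \|K_{\varphi(v)}\| = \|C_\varphi^* K_v\| = \|K_v\| = \frac{1}{\mu(v)}$, i.e. $\frac{\mu(v)}{\mu(\varphi(v))} = 1$ for all $v \in T$.

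The one delicate point is the second step: the isometry hypothesis on $C_\varphi$ by itself only yields $\|C_\varphi^*\psi\| \le \|\psi\|$, and the reverse inequality genuinely requires surjectivity of $C_\varphi$ (equivalently, that $C_\varphi^*$ is bounded below), which is why invertibility is invoked rather than the isometry property in isolation. Everything else is a direct substitution into results already in hand — the norm of $K_v$, Lemma \ref{Lemma:equal functionals}, and Theorem \ref{Theorem:necessary} — and in particular this route recovers the weight identity without assuming injectivity of $\varphi$ in advance, so Theorem \ref{Theorem:injective} is not needed here.
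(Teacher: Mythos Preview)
Your proof is correct and the injectivity argument is identical to the paper's: both pass to the adjoint, use $C_\varphi^* K_v = K_{\varphi(v)}$, note that $C_\varphi^*$ is injective (indeed an isometry) when $C_\varphi$ is a surjective isometry, and finish with Lemma~\ref{Lemma:equal functionals}. The one genuine difference is in how the weight identity is extracted. The paper, having established that $\varphi$ is injective, simply invokes Theorem~\ref{Theorem:injective} to obtain both surjectivity of $\varphi$ and $\mu(v)/\mu(\varphi(v))=1$; that theorem's proof in turn rests on a test-function computation with $\chi_{\varphi(x)}/\mu$. You instead read the weight identity off directly from $\|C_\varphi^* K_v\| = \|K_v\|$ and the formula $\|K_v\| = 1/\mu(v)$, bypassing Theorem~\ref{Theorem:injective} altogether. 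Your route is slightly more self-contained and has the pleasant feature that the weight identity falls out without first verifying injectivity of $\varphi$; the paper's route has the advantage of reusing an already-proved characterization. You also spell out why $C_\varphi^*$ is an isometry (via invertibility and norm-one inverse), which the paper asserts but does not justify in detail.
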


\begin{proof}
By Theorem \ref{Theorem:injective}, it suffices to show that if $C_\varphi$ is a surjective isometry, then $\varphi$ is injective.  The set of point-evaluation functions is invariant under the action of $C_\varphi^*$ (see \cite[Theorem 1.4]{CowenMacCluer:1995}).  Moreover, for every $v \in T$ we have $C_\varphi^* K_v = K_{\varphi(v)}$.  In addition, since $C_\varphi$ is a surjective isometry, then $C_\varphi^*$ is an isometry.

Suppose $v$ and $w$ are vertices in a tree $T$ such that $\varphi(v) = \varphi(w)$.  Then $$C_\varphi^* K_v = K_{\varphi(v)} = K_{\varphi(w)} = C_\varphi^* K_w.$$ Since $C_\varphi^*$ is injective, we obtain $K_v = K_w$.  By Lemma \ref{Lemma:equal functionals}, we have $v = w$, and thus $\varphi$ is injective.
\end{proof}

We conclude this section with an example of a symbol $\varphi$ that induces an isometric composition operator on $\Linf$ but not on all the $\Lmuinf$.

\begin{example}
Define the map $\varphi$ on a tree $T$ by $$\varphi(v) = \begin{cases} v^- & \text{if } v \in T^*,\\o & v = o.\end{cases}$$
On the space $\Linf$, $\varphi$ induces an isometric composition operator by Corollary \ref{Corollary:isometry}.  Interestingly, this map does not induce an isometric composition operator on other $\Lmuinf$ spaces.  Consider the weight $\mu(v) = 2^{|v|}$.  So $$\frac{\mu(v)}{\mu(\varphi(v))} = \begin{cases}2 & \text{if } v \neq o,\\1 & v = o.\end{cases}$$  Thus by Theorem \ref{Theorem:necessary}, $C_\varphi$ is not an isometry on $\Lmuinf$.
\end{example}

\bibliographystyle{amsplain}
\bibliography{references.bib}

\providecommand{\bysame}{\leavevmode\hbox to3em{\hrulefill}\thinspace}
\providecommand{\MR}{\relax\ifhmode\unskip\space\fi MR }
% \MRhref is called by the amsart/book/proc definition of \MR.
\providecommand{\MRhref}[2]{%
  \href{http://www.ams.org/mathscinet-getitem?mr=#1}{#2}
}
\providecommand{\href}[2]{#2}
\begin{thebibliography}{10}

\bibitem{AllenColonnaEasley:2012}
Robert~F. Allen, Flavia Colonna, and Glenn~R. Easley, \emph{Multiplication
  operators on the iterated logarithmic {L}ipschitz spaces of a tree},
  Mediterr. J. Math. \textbf{9} (2012), no.~4, 575--600. \MR{2991152}

\bibitem{AllenColonnaEasley:2014}
\bysame, \emph{Composition operators on the {L}ipschitz space of a tree},
  Mediterr. J. Math. \textbf{11} (2014), no.~1, 97--108. \MR{3160615}

\bibitem{AllenCraig:2015}
Robert~F. Allen and Isaac~M. Craig, \emph{Multiplication operators on weighted
  {B}anach spaces of a tree}, Bull. Korean Math. Soc. \textbf{54} (2017),
  no.~3, 747--761. \MR{3659146}

\bibitem{BonetDomanskiLindstromTaskinen:1998}
J.~Bonet, Pawel Doma\'{n}ski, Mikael Lindstr\"{o}m, and Jari Taskinen,
  \emph{Composition operators between weighted {B}anach spaces of analytic
  functions}, J. Austral. Math. Soc. Ser. A \textbf{64} (1998), no.~1,
  101--118. \MR{1490150}

\bibitem{BonetDomanskiLindstrom:1999}
Jos\'{e} Bonet, Pawel Doma\'{n}ski, and Mikael Lindstr\"{o}m, \emph{Essential
  norm and weak compactness of composition operators on weighted {B}anach
  spaces of analytic functions}, Canad. Math. Bull. \textbf{42} (1999), no.~2,
  139--148. \MR{1692002}

\bibitem{BonetLindstromWolf:2008}
Jos\'{e} Bonet, Mikael Lindstr\"{o}m, and Elke Wolf, \emph{Isometric weighted
  composition operators on weighted {B}anach spaces of type {$H^\infty$}},
  Proc. Amer. Math. Soc. \textbf{136} (2008), no.~12, 4267--4273. \MR{2431039}

\bibitem{ColonnaEasley:2012}
Flavia Colonna and Glenn Easley, \emph{Multiplication operators between the
  {L}ipschitz space and the space of bounded functions on a tree}, Mediterr. J.
  Math. \textbf{9} (2012), no.~3, 423--438. \MR{2954500}

\bibitem{ColonnaEasley:2010}
Flavia Colonna and Glenn~R. Easley, \emph{Multiplication operators on the
  {L}ipschitz space of a tree}, Integral Equations Operator Theory \textbf{68}
  (2010), no.~3, 391--411. \MR{2735443}

\bibitem{ContrerasHernandez-Diaz:2000}
Manuel~D. Contreras and Alfredo~G. Hern\'{a}ndez-D\'{i}az, \emph{Weighted
  composition operators in weighted {B}anach spaces of analytic functions}, J.
  Austral. Math. Soc. Ser. A \textbf{69} (2000), no.~1, 41--60. \MR{1767392}

\bibitem{CowenMacCluer:1995}
Carl~C. Cowen and Barbara~D. MacCluer, \emph{Composition operators on spaces of
  analytic functions}, Studies in Advanced Mathematics, CRC Press, Boca Raton,
  FL, 1995. \MR{1397026}

\bibitem{MuthukumarPonnusamy-I:2016}
Perumal Muthukumar and Saminathan Ponnusamy, \emph{Composition operators on the
  discrete analogue of generalized {H}ardy space on homogenous trees}, Bull.
  Malays. Math. Sci. Soc. \textbf{40} (2017), no.~4, 1801--1815. \MR{3712587}

\bibitem{MuthukumarPonnusamy:2016}
\bysame, \emph{Discrete analogue of generalized {H}ardy spaces and
  multiplication operators on homogenous trees}, Anal. Math. Phys. \textbf{7}
  (2017), no.~3, 267--283. \MR{3683009}

\end{thebibliography}
\end{document}